\newtheorem{theorem}{Theorem}[section]
\newtheorem{lemma}{Lemma}[section]
\newtheorem{corollary}{Corollary}[section]
\theoremstyle{definition}
\theoremstyle{remark}
\newtheorem{remark}{Remark}[section]
\numberwithin{equation}{section}
\begin{document}

\title[Distribution of Permutation Matrix Entries Under Randomized Basis]{The Distribution of Permutation Matrix Entries Under Randomized Basis}

\author{Benjamin Tsou}
\address{Department of Mathematics, University of California, Berkeley, CA 94720-3840}
\email{benjamintsou@gmail.com}

\begin{abstract}

We study the distribution of entries of a random permutation matrix under a ``randomized basis,'' i.e., we conjugate the random permutation matrix by an independent random orthogonal matrix drawn from Haar measure.  It is shown that under certain conditions, the linear combination of entries of a random permutation matrix under a ``randomized basis'' converges to a sum of independent variables $sY+Z$ where $Y$ is Poisson distributed, $Z$ is normally distributed, and $s$ is a constant.

\vspace{12pt}
\noindent \textbf{AMS MSC (2010): } 60B20; 60F05; 60C05; 51F25

\vspace{12pt}
\noindent \textbf{Keywords:} random permutation; random matrix; Haar measure; trace; linear statistics.

\end{abstract}

\maketitle
\allowdisplaybreaks

\section{Introduction}
Traditionally, random matrix theory has focused primarily on the eigenvalue distributions of matrices drawn from various measures (see e.g. \cite{diaconisgibbs} for a quick survey or \cite{andersonguionnet} for an introduction to the field).  However, there has also been interest in studying statistics related to entries of random matrices and specifically Haar-distributed orthogonal matrices.  Historically, one of the earliest results in this direction is due to Borel \cite{borel} who showed that the first entry of an $n \times n$ Haar-distributed orthogonal matrix scaled by $\sqrt{n}$ converges in distribution to a standard normal.  Diaconis and Freedman \cite{diaconisfreedman} strengthened this to obtain total variation convergence of the first $k$ coordinates of a random vector on the sphere to independent standard normals when $k = o(n)$.  Independently, Jiang and Ma \cite{jiang2} and Stewart \cite{stewart} (extending on work in \cite{jiang}) have further strengthened this result considerably to show that if $p_n \ge 1$, $q_n \ge 1$ and $p_n q_n /n \to 0$, then the total variation distance between the joint distribution of the entries of the upper left $p_n \times q_n$ block of an $n \times n$ Haar-distributed orthogonal matrix and $p_nq_n$ independent standard normals converges to 0.  Moreover, they show that this result is sharp.  In \cite{jiang2}, corresponding results for other distance metrics are proven as well.  

Matrix entries of random permutation matrices have also been studied.  Hoeffding's combinatorial central limit theorem \cite{hoeffding} gives a result on linear combinations of random permutation matrix entries.  Let $A_n$ be a sequence of $n \times n$ real matrices such that $Tr(A_n A_n^T) = n$.  Hoeffding shows that under certain conditions on $A_n$, if $P$ is a random $n \times n$ permutation matrix, then $Tr(A_nP)$ converges weakly to a standard normal random variable as $n \to \infty$.  

In this article, we study the distribution of entries of a random permutation matrix in a basis free way.  Individual entries of a permutation matrix are either 0 or 1, but if we conjugate by a random orthogonal matrix, we can look at the distribution of entries under a ``randomized basis.''   To formalize this, let $P$ be an $n \times n$ permutation matrix drawn uniformly from the symmetric group $\mathfrak{S}_n$ and let $M$ be a Haar-distributed $n \times n$ orthogonal matrix independent of $P$.  (With little confusion, we will often conflate a permutation matrix and its corresponding permutation in $\mathfrak{S}_n$).  Let $A_n$ be a sequence of $n \times n$ real matrices such that $Tr(A_n A_n^T) = n$.  Then it is shown in Theorem \ref{main result} below that under certain conditions on $A_n$, \begin{equation} Tr(A_n M P M^T) \buildrel d \over \to sY+Z \end{equation} as $n \to \infty$ where $Y$ and $Z$ are independent random variables such that $Y$ is Poisson distributed, $Z$ is normally distributed, and $s$ is a constant.  Whereas Hoeffding's combinatorial CLT shows convergence for linear combinations of random permutation matrix entries, Theorem \ref{main result} can be interpreted as a distributional convergence for linear combinations of random permutation matrix entries under a ``randomized basis.''

We briefly review some other related results.  Let $A$ be an $n \times n$ real (nonrandom) matrix such that $Tr(A A^T) = n$.  D'Aristotile, Diaconis, and Newman \cite{diaconisnewman} have shown using characteristic function methods that if $M$ is an $n \times n$ random orthogonal matrix, then $Tr(A M)$ converges in distribution to a standard normal random variable as $n \to \infty$ uniformly in $A$.  Meckes \cite{meckes} gives quantitative bounds and shows that the total variation distance between $Tr(A M)$ and a standard normal random variable is bounded by $\frac{2 \sqrt{3}}{n-1}$ and that this rate is sharp up to a constant.  In \cite{chatterjee}, Chatterjee and Meckes obtain bounds on the Wasserstein distance between the multivariate distributions $(Tr(A_1M), Tr(A_2 M),...,Tr(A_k M))$ and a Gaussian random vector.  Their techniques involve generalizing Stein's method of exchangeable pairs.

\section{Limiting distribution of a single entry} \label{single entry}

In this section, we start by showing that a single scaled matrix entry of $MP M^T$ converges in distribution to a standard normal distribution.  The next section will consider the distribution of linear combinations of multiple matrix entries.  

Note that the distribution of $MP M^T$ where $P$ is random can be thought of as a mixture of  distributions of $MP M^T$ where $P$ is a fixed permutation matrix.  Thus, we first assume $P$ is fixed.  If $P$ is similar to another permutation matrix $Q$ via an orthogonal transformation $V$, i.e. $P = VQV^T$, then $MP M^T = (MV)Q(MV)^T \buildrel d \over = MQM^T$.  Thus, the distribution of $MPM^T$ only depends on the cycle type of the permutation matrix $P$.     

First, we consider the case where the permutation consists of the single $n$-cycle $(1,2,...,n)$.  Let $C_n$ be the corresponding permutation matrix.  Then \begin{equation} (M C_n M^T)_{ab} = \sum_{i=1}^n M_{a,i}M_{b,i+1} \end{equation}  (where abusing notation slightly, $M_{b,n+1}:= M_{b,1}$) By symmetry, it is clear that $\mathbb{E}[M_{ij}^2] = 1/n$.  Moreover, $\displaystyle{\sqrt{n}(M_{a,i}, M_{b, i+1}) \buildrel d \over \rightarrow (X_1, X_2)}$ where $X_1$ and $X_2$ are i.i.d $\mathcal{N}(0,1)$.  (The convergence is actually in total variation, see e.g. \cite{diaconisfreedman, jiang}).  Therefore, $\sum\limits_{i=1}^n M_{a,i}M_{b,i+1}$ should have variance on the order of $1/n$ and it is natural to consider the scaled random variable $\displaystyle{\sqrt{n} (M C_n M^T)_{ab}}$.  

To obtain the limiting distribution of $\displaystyle{\sqrt{n} (M C_n M^T)_{ab}}$ (as well as the limiting distribution involving multiple entries derived later on), the key tool we will apply is a martingale central limit theorem for dependent random variables proved by McLeish \cite{mcleish}.  (Other approaches such as the moment method \cite{dey} have also been explored.)
 
Specifically, we will use Corollary (2.13) of \cite{mcleish} to prove our result.  Let $\{X_{n,i}; 1 \le i \le n\}$ be a triangular array of random variables.  Define $\displaystyle{\sigma_{n,i}^2 = \mathbb{E}[X_{n,i}^2]}$, $\displaystyle{s_n^2 = \sum_{i=1}^n \sigma_{n,i}^2}$, and $\displaystyle{S_n = \sum_{i=1}^{n} X_{n,i}}$.  Let ${\mathcal{F}}_{n,i} = \sigma(X_{n,1}, X_{n,2},...,X_{n,i})$.  We say that $X_{n,i}$ is a martingale difference array if $\mathbb{E}(X_{n,i} | {\mathcal{F}}_{n,i-1}) = 0$.  Then we have the following:

\begin{theorem}[McLeish]\label{mcleish} Suppose $X_{n,i}$ is a martingale difference array normalized by its variance $s_n^2$, satisfying the Lindeberg condition $\displaystyle{\lim_{n \to \infty} \sum_{i=1}^n \int_{|X_{n,i}| > \epsilon} X_{n,i}^2 dP = 0}$ for all $\varepsilon > 0$ and the condition \begin{equation}\label{mcleishcondition} \limsup_{n \to \infty} \sum_{i \neq j} \mathbb{E}[X_{n,i}^2 X_{n,j}^2] \le 1\end{equation}  Then $S_n \buildrel d \over \to \mathcal{N}(0,1)$.      

\end{theorem}

 For a book-length treatment on martingale limit theorems, see \cite{heyde}.  The following lemma will also be useful in the sequel:

\begin{lemma}\label{moments}

Let the random variable $X_n = n^k M_1^{k_1}...M_{r}^{k_r}$ where $k_1+...+k_r = 2k$ and $M_i$ are distinct entries of an $n \times n$ Haar distributed orthogonal matrix.  Define $Y = Z_1^{k_1}...Z_{r}^{k_r}$ where $Z_i$ are i.i.d. standard normals.  Then $X_n \buildrel L^1 \over \to Y$.   

\end{lemma}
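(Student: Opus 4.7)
Since $k_1+\cdots+k_r=2k$, the scaling $n^k$ redistributes cleanly:
\[
X_n=(\sqrt n\,M_1)^{k_1}\cdots(\sqrt n\,M_r)^{k_r}.
\]
My plan is to combine convergence in distribution $X_n\xrightarrow{d}Y$ with a uniform $L^2$ bound on $X_n$, and then invoke the fact that distributional convergence plus uniform integrability upgrades to $L^1$ convergence on a common probability space obtained from a maximal coupling.

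For the distributional convergence, I would invoke Jiang's theorem cited in the introduction. The $r$ distinct entries $M_1,\ldots,M_r$ lie inside some fixed-size rectangular subblock of $M$, and by Haar invariance the joint law of any such subblock equals that of the upper-left block, so Jiang's result yields
\[
(\sqrt n\,M_1,\ldots,\sqrt n\,M_r)\xrightarrow{\mathrm{TV}}(Z_1,\ldots,Z_r),
\]
with $Z_i$ i.i.d.\ standard normal. The continuous mapping theorem applied to the polynomial $(x_1,\ldots,x_r)\mapsto x_1^{k_1}\cdots x_r^{k_r}$ then gives $X_n\xrightarrow{d}Y$.

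For the uniform $L^2$ bound, I would apply H\"older's inequality with conjugate exponents $p_i=2k/k_i$ (valid because $\sum_i 1/p_i=\sum_i k_i/(2k)=1$):
\[
\mathbb{E}\bigl[M_1^{2k_1}\cdots M_r^{2k_r}\bigr]\le\prod_{i=1}^r\mathbb{E}\bigl[M_i^{4k}\bigr]^{k_i/(2k)}.
\]
Each single-entry moment can be read off from the fact that $M_{ij}^2\sim\mathrm{Beta}(\tfrac12,\tfrac{n-1}{2})$, giving $\mathbb{E}[M_i^{4k}]=O(n^{-2k})$ with an $n$-independent constant. Multiplying, $\mathbb{E}[X_n^2]=n^{2k}\,\mathbb{E}[M_1^{2k_1}\cdots M_r^{2k_r}]=O(1)$, so $\mathbb{E}[Y^2]$ is likewise finite. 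The total variation coupling from the previous step places $X_n$ and $Y$ on a common probability space on which they coincide outside an event $B_n$ with $\mathbb{P}(B_n)\to 0$, and Cauchy--Schwarz together with the uniform $L^2$ bounds yields
\[
\mathbb{E}|X_n-Y|=\mathbb{E}\bigl[|X_n-Y|\mathbf{1}_{B_n}\bigr]\le\sqrt{\mathbb{E}[(X_n-Y)^2]\,\mathbb{P}(B_n)}\to 0.
\]

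The main obstacle is the uniform moment bound: the dependence among the entries of $M$ prevents direct factorization of $\mathbb{E}[M_1^{2k_1}\cdots M_r^{2k_r}]$, but H\"older trades this dependence for higher single-entry moments, which remain controlled uniformly in $n$ through the Beta distribution of squared entries. Everything else is routine combination of Jiang's theorem, continuous mapping, and a standard uniform-integrability argument.
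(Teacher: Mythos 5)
Your proposal is correct and follows essentially the same route as the paper: Jiang's total variation result plus the continuous mapping theorem for the distributional convergence, and a uniform second-moment bound via the Beta$(\tfrac12,\tfrac{n-1}{2})$ law of $M_{ij}^2$ to upgrade to $L^1$. Your use of generalized H\"older in place of the paper's Cauchy--Schwarz reduction, and your explicit maximal-coupling argument in place of the paper's appeal to uniform integrability, are just slightly more detailed renderings of the same steps (the coupling step in particular makes the meaning of the $L^1$ statement precise).
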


\begin{proof}
By \cite{jiang}, the joint distribution of any $k$ fixed entries (scaled by $\sqrt{n}$) of a Haar distributed orthogonal matrix converges in total variation to the joint distribution of $k$ i.i.d. standard normals.  In particular, by the continuous mapping theorem, we see that $X_n \buildrel d \over \to Y$.   To get $L^1$ convergence, we need uniform integrability, which is implied by $\limsup\limits_{n \to \infty} \mathbb{E}[ X_n^2] < \infty.$  By the Cauchy-Schwarz inequality, it is sufficient to show that $\limsup\limits_{n \to \infty} \mathbb{E}[ n^{2k} M_1^{4k}] < \infty.$  But $M_1^2$ has a Beta distribution with parameters $(1/2, (n-1)/2)$ (see e.g. \cite[pp.~145]{petz}) and hence $\displaystyle{\mathbb{E}[ n^{2k} M_1^{4k}] = O(1)}$.  

\end{proof}

\begin{remark}\label{collins}
Lemma \ref{moments} gives rather crude asymptotics for the expectation of products of Haar distributed matrix entries.  Collins and Sniady \cite{collinssniady} have developed machinery using Weingarten functions to compute the integrals of polynomial functions of Haar distributed orthogonal, unitary, and symplectic entries for any dimension $n$.  (Please refer to \cite{collinssniady} for the definition of Weingarten functions.  We will only require the asymptotics of Weingarten functions mentioned below.)

For the orthogonal group, Collins and Sniady's formula says that the integral over Haar measure $\displaystyle{\int_{M \in O(n)} M_{i_1 j_1}...M_{i_{2r} j_{2r}} dM }$ is the sum of Weingarten functions $Wg(\mathfrak{m}, \mathfrak{n})$ over pair partitions $\mathfrak{m}$ and $\mathfrak{n}$ of the set $\{1,...,2r\}$ such that $\displaystyle{i_{\mathfrak{m}(2k-1)} = i_{\mathfrak{m}(2k)}}$ and $\displaystyle{j_{\mathfrak{n}(2k-1)} = j_{\mathfrak{n}(2k)} }$ where $\displaystyle{\mathfrak{m} = \{\{\mathfrak{m}(1),\mathfrak{m}(2)\},...,\{\mathfrak{m}(2r-1),\mathfrak{m}(2r)\}\}}$ and $\mathfrak{n} = \{\{\mathfrak{n}(1),\mathfrak{n}(2)\},...,\{\mathfrak{n}(2r-1),\mathfrak{n}(2r)\}\}$.  In other words, $\mathfrak{m}$ represents pairing up matrix entries in the same row and $\mathfrak{n}$ represents pairing up entries in the same column.  In particular, the expectation is non-zero only if there are an even number of entries in each row and column.   

In \cite{collinsmatsumoto}, Collins and Matsumoto obtain a simpler formula for the orthogonal Weingarten function that significantly reduces the complexity involved in the computation of Weingarten formulas and show that the integrals are always rational functions of the dimension $n$.  They also provide asymptotics for the Weingarten function: 
\begin{equation}\label{weingartenFormula} Wg(\mathfrak{m}, \mathfrak{n}) = O(1/n^{-2r + l(\mathfrak{m}, \mathfrak{n})}) \end{equation}
Here, the length $l(\mathfrak{m}, \mathfrak{n})$ is defined as follows: consider the graph with vertex set ${1, 2,...,2r}$ and edge set that consists of $\{\mathfrak{m}(2k-1), \mathfrak{m}(2k)\}, \{\mathfrak{n}(2k-1), \mathfrak{n}(2k)\}$ where $1 \le k \le r$.  Then $l(\mathfrak{m}, \mathfrak{n})$ is the number of components in this graph.  Thus, finding the asymptotics of the integral $\displaystyle{\int_{M \in O(n)} M_{i_1 j_1}...M_{i_{2r} j_{2r}} dM }$ is equivalent to finding the (admissible) pairings $\mathfrak{m}$ and $\mathfrak{n}$ that maximize the number of components in the graph.  See
\cite{collinsmatsumoto} also for a useful table of values for the orthogonal Weingarten functions of small dimension. 

\end{remark}

For our purposes, it will be simpler to use the stronger Lyapunov's condition in place of the Lindeberg condition in McLeish's theorem.  For $\delta > 0$, Lyapunov's condition says that \begin{equation} \label{Lyapunov} \lim_{n \to \infty} \frac{1}{s_n^{2+\delta}} \sum_{i=1}^n \mathbb{E}\Big[|X_{n,i}|^{2+\delta} \Big] = 0. \end{equation}  

We apply this now to show

\begin{lemma} \[\sqrt{n} \sum_{i=1}^n M_{a,i}M_{b,i+1} \buildrel d \over \rightarrow \mathcal{N}(0,1)\] \end{lemma}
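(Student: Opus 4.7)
The plan is to apply Theorem \ref{mcleish} with Lyapunov's condition (\ref{Lyapunov}), using the filtration that reveals the columns of $M$ one by one: set $\mathcal{F}_i = \sigma(M_{\cdot, 1}, \ldots, M_{\cdot, i+1})$ and $X_{n,i} := \sqrt{n}\, M_{a,i} M_{b, i+1}$ for $1 \le i \le n-1$. The martingale difference property follows from sign-flip symmetry: conditional on $\mathcal{F}_{i-1}$, the column $M_{\cdot, i+1}$ is uniform on the unit sphere of the orthogonal complement of $\mathrm{span}(M_{\cdot, 1}, \ldots, M_{\cdot, i})$, a distribution invariant under $v \mapsto -v$, so $\mathbb{E}[M_{b, i+1} \mid \mathcal{F}_{i-1}] = 0$ and hence $\mathbb{E}[X_{n,i} \mid \mathcal{F}_{i-1}] = 0$.

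The sum $\sum_{i=1}^{n-1} X_{n,i}$ differs from $\sqrt{n}\sum_{i=1}^n M_{a,i} M_{b, i+1}$ only by the wrap-around term $\sqrt{n}\, M_{a,n} M_{b,1}$. Cauchy-Schwarz combined with Lemma \ref{moments} shows $\mathbb{E}[M_{a,n}^2 M_{b,1}^2] = O(1/n^2)$, making this extra term $O(1/\sqrt{n})$ in $L^1$. By Slutsky's theorem, it suffices to show $\sum_{i=1}^{n-1} X_{n,i}/s_n \buildrel d \over \to \mathcal{N}(0,1)$.

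Since $M_{a,i}$ and $M_{b, i+1}$ always lie in distinct columns and are therefore distinct entries of $M$ even when $a = b$, Lemma \ref{moments} gives $\sigma_{n,i}^2 = n\, \mathbb{E}[M_{a,i}^2 M_{b, i+1}^2] \sim 1/n$, whence $s_n^2 \to 1$. Similarly $\mathbb{E}[X_{n,i}^4] = n^2 \mathbb{E}[M_{a,i}^4 M_{b, i+1}^4] = O(1/n^2)$, so $\sum_i \mathbb{E}[X_{n,i}^4] = O(1/n) \to 0$, establishing Lyapunov's condition with $\delta = 2$.

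The main technical step is McLeish's second condition, $\limsup \sum_{i \neq j} \mathbb{E}[X_{n,i}^2 X_{n,j}^2]/s_n^4 \le 1$. I would split according to $|i - j|$: for $|i - j| > 1$, the four entries $M_{a,i}, M_{b,i+1}, M_{a,j}, M_{b,j+1}$ come from four distinct columns, and Lemma \ref{moments} gives $n^4 \mathbb{E}[M_{a,i}^2 M_{b,i+1}^2 M_{a,j}^2 M_{b,j+1}^2] \to \mathbb{E}[Z_1^2 Z_2^2 Z_3^2 Z_4^2] = 1$, so these pairs contribute $\sim (n-3)(n-2)/n^2 \to 1$. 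The $O(n)$ adjacent pairs $|i-j| = 1$ (and any extra overlaps forced by $a=b$) still yield $\mathbb{E}[X_{n,i}^2 X_{n,j}^2] = O(1/n^2)$ by Lemma \ref{moments}, contributing only $O(1/n) \to 0$ in total. Carefully book-keeping these overlap cases is where I expect most of the routine work to go; once finished, Theorem \ref{mcleish} combined with Slutsky's theorem yields the stated convergence.
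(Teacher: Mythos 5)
Your proposal is correct and follows essentially the same route as the paper: McLeish's martingale CLT with Lyapunov's condition for $\delta=2$, the same split of $\sum_{i \neq j} \mathbb{E}[X_{n,i}^2 X_{n,j}^2]$ into the $|i-j|>1$ and $|i-j|=1$ pieces, and Lemma \ref{moments} for the moment asymptotics. The only differences are minor: you peel off the wrap-around term $\sqrt{n}\,M_{a,n}M_{b,1}$ and use a column-revealing filtration with the conditional-law symmetry of $M_{\cdot,i+1}$, whereas the paper keeps all $n$ summands, argues the martingale property by a sign flip of $M_{b,i+1}$, and reads the exact variance off the Beta law of $M_{ij}^2$.
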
   
\begin{proof}

Let $X_{n,i} = \sqrt{n} M_{a,i}M_{b,i+1}$.  By sign symmetry, $\mathbb{E}[X_{n,i}] = 0$.  Note that the random vector $(X_{n,1}, X_{n,2},...,X_{n,i-1})$ is only a function of $(M_{a,1}, M_{a,2},...,M_{a,i-1},$ \allowbreak $M_{a,i}, M_{b,2},...,M_{b,i})$.  Since $M_{b,i+1}$ does not share a column with any of these matrix entries, its sign can be reversed independently while preserving Haar measure.  Thus, \[(X_{n,1}, X_{n,2},...,X_{n,i-1}, M_{b,i+1}) \buildrel d \over = (X_{n,1}, X_{n,2},...,X_{n,i-1}, -M_{b,i+1})\] and therefore \[\mathbb{E}(X_{n,i} | {\mathcal{F}}_{n,i-1}) = \sqrt{n}\mathbb{E}(M_{a,i}M_{b,i+1} | \sigma(X_{n,1}, X_{n,2},...,X_{n,i-1})) = 0.\]  
This shows that $X_{n,i}$ is a martingale difference array.  Let us now verify the conditions in McLeish's theorem.  

The variance $\sigma_{n,i}^2$ is $\displaystyle{\frac{1}{n+2}}$ when $a = b$ and $\displaystyle{\frac{n+1}{(n-1)(n+2)}}$ when $a \neq b$ (see e.g. \cite[chapt. 4]{petz}).  Thus, $\displaystyle{\lim_{n \to \infty} \sum_{i=1}^n \sigma_{n,i}^2 = 1}$ and the array is already normalized.  

By the row invariance of Haar measure, all the random variables $X_{n,i}$ are identically distributed.  Thus, Lyapunov's condition for $\delta = 2$ simply states that $\displaystyle{\lim_{n \to \infty} n \mathbb{E}[ X_{n,i}^4] = 0}$.  By Lemma \ref{moments}, $\displaystyle{n^2 \mathbb{E}[ X_{n,i}^4] = n^4 \mathbb{E}[M_{a,i}^4 M_{b,i+1}^4] = O(1) }$ and Lyapunov's condition is satisfied.    

It remains to verify condition \eqref{mcleishcondition}.  We split this into two sums: \begin{equation} \sum_{i \neq j} \mathbb{E} [X_{n,i}^2 X_{n,j}^2] = \sum_{|i-j| > 1} \mathbb{E} [X_{n,i}^2 X_{n,j}^2] + \sum_{|i-j| = 1} \mathbb{E} [X_{n,i}^2 X_{n,j}^2] \end{equation}  Note that the random variables $X_{n,i}^2 X_{n,j}^2$ such that $|i-j| > 1$ are all identically distributed, and similarly for $|i-j| = 1$.  For the first sum, we have \[\lim_{n \to \infty} \sum_{|i-j| > 1} \mathbb{E} [X_{n,i}^2 X_{n,j}^2] = \lim_{n \to \infty} \mathbb{E} [n^2 X_{n,1}^2 X_{n,3}^2] = \lim_{n \to \infty} \mathbb{E}[n^4 M_{a,1}^2 M_{b,2}^2 M_{a,3}^2 M_{b,4}^2] = 1.\]  The asymptotic of the second sum is \[\sum_{|i-j| = 1} \mathbb{E} [X_{n,i}^2 X_{n,j}^2] = O(1/n)\] and therefore \[\limsup_{n \to \infty} \sum_{i \neq j} \mathbb{E} [X_{n,i}^2 X_{n,j}^2] = 1.\] 
\end{proof}

We've shown that if $C_n$ is the matrix corresponding to the $n$-cycle $(1,2,3,...,n)$ then each matrix entry $(M C_n M^T)_{ab}$ scaled by $\sqrt{n}$ converges to the standard normal distribution.  It turns out that $C_n$ is in fact a fairly generic permutation in $\mathfrak{S}_n$.  The following lemma makes this precise.  

\begin{lemma}\label{Q_n} Let $M$ be an $n \times n$ Haar distributed orthogonal matrix and $Z$ be a $\mathcal{N}(0,1)$ distributed random variable.  Then \[\lim_{n \to \infty} \max_{Q_n} |\mathbb{P}\{\sqrt{n}(M Q_n M^T)_{ab} \le x\} - \mathbb{P}\{Z \le x\}| = 0 \] 
where the maximum is taken over the set of all $n \times n$ permutation matrices, $Q_n$, such that the number of cycles in the corresponding permutation is at most $2 \log n$.

\end{lemma}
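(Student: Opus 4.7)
The plan is to extend the martingale-CLT argument from the previous lemma to a general permutation $Q_n$ with few cycles, treating the deviation from the single $n$-cycle case as a perturbation that is negligible when the cycle count is $O(\log n)$. As noted at the start of the section, conjugating $Q_n$ by a suitable orthogonal matrix reduces to the case where $Q_n$ is block-diagonal with each block a cyclic-shift matrix, so with cycles $c_1, \ldots, c_k$ and $c_j = (v_1^j, v_2^j, \ldots, v_{n_j}^j)$ we have
\[
\sqrt{n}(M Q_n M^T)_{ab} = \sum_{j=1}^{k}\sum_{l=1}^{n_j} \sqrt{n}\, M_{a, v_l^j} M_{b, v_{l+1}^j}
\]
with $v_{n_j+1}^j := v_1^j$. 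I would split this sum into three pieces: $T_F$ collects the contributions from fixed points ($n_j = 1$); $T_C$ collects, for each cycle of length at least $2$, the ``closing term'' $\sqrt{n}\, M_{a, v_{n_j}^j} M_{b, v_1^j}$; and $T_N$ collects the remaining nice terms.

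To apply Theorem \ref{mcleish} to $T_N$, I would order the cycles arbitrarily and inside each cycle $c_j$ order the $n_j - 1$ nice terms by $l = 1, 2, \ldots, n_j - 1$. At each such step the column $v_{l+1}^j$ has not yet appeared among earlier terms (columns are disjoint across cycles, and within a cycle a fresh column is introduced at each non-closing step), so reversing its sign preserves Haar measure and flips the current term, giving the martingale-difference property. The variance $s_n^2$ equals $(n-k)$ times the common single-term variance from the previous lemma, which tends to $1$ since $k \le 2\log n = o(n)$. The Lyapunov condition with $\delta = 2$ follows from Lemma \ref{moments} exactly as before. For the cross-moment condition $\limsup \sum_{i \ne j} \mathbb{E}[X_{n,i}^2 X_{n,j}^2] \le 1$, I would split pairs by whether the four column indices involved are distinct: pairs with a column collision occur only when $j$ is a $\sigma^{\pm 1}$-neighbor of $i$, giving at most $2(n-k)$ such pairs, each contributing $O(1/n^2)$ by Lemma \ref{moments}, hence $O(1/n) \to 0$ in total; the remaining $(n-k)(n-k-1) - O(n) \sim n^2$ pairs satisfy $\mathbb{E}[X_{n,i}^2 X_{n,j}^2] = (1 + o(1))/n^2$ by Lemma \ref{moments}, summing to $1 + o(1)$.

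For the residual pieces $T_F$ and $T_C$, each constituent term has $L^2$ norm of order $n^{-1/2}$ (again by Lemma \ref{moments}), and each piece contains at most $k \le 2\log n$ terms, so a direct second-moment estimate, aided by sign-flip vanishing of cross-terms between distinct cycles, yields $\|T_F\|_{L^2}^2,\, \|T_C\|_{L^2}^2 = O((\log n)^2/n) \to 0$. Slutsky's theorem then gives $\sqrt{n}(M Q_n M^T)_{ab} \buildrel d \over \to \mathcal{N}(0,1)$ along any sequence $(Q_n)$ satisfying the cycle bound. The uniform statement in the lemma follows by a subsequence argument: if the maximum failed to tend to $0$ at some $x$, one could extract a subsequence of $Q_n$'s along which convergence fails at $x$, contradicting the conclusion just obtained. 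The main obstacle is the cross-moment condition of Theorem \ref{mcleish}: for an arbitrary cycle structure the bookkeeping of column-coincidence patterns is more delicate than in the single $n$-cycle case, and the bound has to hold uniformly over the large family of permutations allowed by the $2\log n$ hypothesis.
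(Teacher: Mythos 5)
Your proposal is correct, but it takes a genuinely different route from the paper. You re-run the McLeish martingale CLT directly on each $Q_n$, splitting the sum into the ``nice'' terms (which form a martingale difference array under the fresh-column sign-flip argument), the cycle-closing terms, and the fixed-point terms, and disposing of the latter two groups by an $O((\log n)^2/n)$ second-moment bound; uniformity then comes from a subsequence extraction. The paper instead never revisits McLeish for general $Q_n$: it compares $X_n=\sqrt{n}(MQ_nM^T)_{ab}$ to $Y_n=\sqrt{n}(MC_nM^T)_{ab}$ for the single $n$-cycle $C_n$ already handled in the preceding lemma, observes that $X_n-Y_n$ is a sum of only $2k_n\le 4\log n$ terms each of $L^2$-norm $O(n^{-1/2})$, so $\mathbb{E}[(X_n-Y_n)^2]=O(\log^2 n/n)$, and transfers the CLT from $Y_n$ to $X_n$ by Chebyshev plus an explicit $\varepsilon$-sandwich of distribution functions, which yields the uniform-in-$Q_n$ statement directly from the quantitative bound. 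Your approach is self-contained per permutation but pays for it in bookkeeping: you must re-verify the cross-moment condition $\limsup\sum_{i\ne j}\mathbb{E}[X_{n,i}^2X_{n,j}^2]\le 1$ for arbitrary cycle structures (your collision analysis --- only $\sigma^{\pm1}$-neighbours collide, contributing $O(1/n)$ in total, with the $\sim n^2$ non-colliding pairs giving $1+o(1)$ --- does close this correctly), and you need the subsequence trick to convert per-sequence convergence into the stated maximum. The paper's perturbative comparison is shorter, gives the uniformity for free, and --- perhaps its main advantage --- is the template reused essentially verbatim in Lemma \ref{41} for general coefficient matrices $A_n$, where the fixed-point contributions no longer vanish but instead produce the shift $fs$.
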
   

\begin{proof}
Let $\delta > 0$ and let $Q_n$ be a permutation matrix whose corresponding permutation $\sigma \in \mathfrak{S}_n$ has $k_n \le 2 \log n$ cycles.  For ease of notation, we set $X_n = \sqrt{n}(M Q_n M^T)_{ab}$ and $Y_n = \sqrt{n}(M C_n M^T)_{ab}$.  
Since the distribution of $X_n$ only depends on the cycle type of $\sigma$, we can assume  $\sigma = (1, 2, 3, ..., n_1) (n_1 + 1, n_1 + 2,...,n_2)...(n_{k_n-1} + 1, n_{k_n-1} + 2,...,n)$ when decomposed into cycles.  Then \[X_n - Y_n = \sqrt{n} \sum_{i=1}^n M_{a,i}M_{b,\sigma(i)} - \sqrt{n} \sum_{i=1}^n M_{a,i}M_{b,i+1} = \sum_{j = 1}^{2 k_n} W_j \] where  $W_j \buildrel d \over = \sqrt{n} M_{a,c_j} M_{b,d_j}$ for some indices $c_j$ and $d_j$.  (Note that all terms in the difference cancel except those involving $M_{a, n_j}$ for indices $n_j$ where $\sigma$ and the $n$-cycle $(1,...,n)$ differ.)  For sufficiently large $n$, $\displaystyle{\mathbb{E}[W_j^2] < C/n}$ for some constant $C$ and by Cauchy-Schwarz, \[\mathbb{E}[(X_n - Y_n)^2] < 4 (k_n)^2 (C/n). \]

By Chebyshev's inequality, \[ \mathbb{P}\{|X_n - Y_n| > \varepsilon\} \le \varepsilon^{-2} \mathbb{E}[(X_n - Y_n)^2] \le \varepsilon^{-2} \frac{16 C \log^2 n}{n}.\]  We also have the two inequalities
\begin{align*}
&\mathbb{P}\{ X_n \le x\} \le \mathbb{P}\{ Y_n \le x + \varepsilon \} + \mathbb{P}\{ |X_n - Y_n| > \varepsilon \} \\
&\mathbb{P}\{ Y_n \le x - \varepsilon\} \le \mathbb{P}\{ X_n \le x\} + \mathbb{P}\{ |X_n - Y_n| > \varepsilon\}
\end{align*}
Putting them together, we have \[\mathbb{P}\{ Y_n \le x - \varepsilon\} - \mathbb{P}\{ |X_n - Y_n| > \varepsilon\} \le \mathbb{P}\{ X_n \le x\} \le \mathbb{P}\{ Y_n \le x + \varepsilon\} + \mathbb{P}\{ |X_n - Y_n| > \varepsilon\} \]
Thus, 
\begin{align*}
|\mathbb{P}\{ X_n \le x\} - \mathbb{P}\{ Y_n \le x\} | &\le \mathbb{P}\{ Y_n \le x + \varepsilon\} - \mathbb{P}\{ Y_n \le x - \varepsilon\} + 2 \mathbb{P}\{ |X_n - Y_n| > \varepsilon\} \\
&\le \mathbb{P}\{ Y_n \le x + \varepsilon\} - \mathbb{P}\{ Y_n \le x - \varepsilon\} + \varepsilon^{-2} \frac{32C \log^2 n}{n}
\end{align*}
By the triangle inequality, 
\begin{multline*}
\mathbb{P}\{ Y_n \le x + \varepsilon\} - \mathbb{P}\{ Y_n \le x - \varepsilon\} \le |\mathbb{P}\{ Y_n \le x + \varepsilon\} - \mathbb{P}\{ Z \le x + \varepsilon\}| \\ 
+ \mathbb{P}\{ Z \le x + \varepsilon\} - \mathbb{P}\{ Z \le x - \varepsilon\} + |\mathbb{P}\{ Y_n \le x - \varepsilon\} - \mathbb{P}\{ Z \le x - \varepsilon\}|
\end{multline*}
Also, \[ |\mathbb{P}\{X_n \le x\} - \mathbb{P}\{Z \le x\}| \le |\mathbb{P}\{X_n \le x\} - \mathbb{P}\{Y_n \le x\}| + |\mathbb{P}\{Y_n \le x\} - \mathbb{P}\{Z \le x\} |.\]  Since $Y_n$ converges weakly to $Z$, $|\mathbb{P}\{Y_n \le x\} - \mathbb{P}\{ Z \le x \}| < \delta/2$ for sufficiently large $n$.  Choose small enough $\varepsilon$ such that $\mathbb{P}\{ Z \le x + \varepsilon\} - \mathbb{P}\{ Z \le x - \varepsilon\} < \delta/10$.  For large enough $n$, the three quantities $| \mathbb{P}\{ Y_n \le x + \varepsilon\} - \mathbb{P}\{ Z \le x + \varepsilon\} |$, $|\mathbb{P}\{ Y_n \le x - \varepsilon\} - \mathbb{P}\{ Z \le x - \varepsilon\} |$, and $\displaystyle{\varepsilon^{-2} \frac{32C \log^2 n}{n} }$ are all smaller than $\delta/10$.  Then $|\mathbb{P}\{ X_n \le x\} - \mathbb{P}\{ Y_n \le x \}| < \delta/2$.  Since $Q_n$ was arbitrary, this proves the lemma.  
\end{proof}

Let $K_n(\sigma)$ denote the number of cycles in a random permutation $\sigma \in \mathfrak{S}_n$.  The classical Goncharov theorem (see e.g. \cite[p.~116]{durrett}) states that $\displaystyle{\frac{K_n - \log n}{\sqrt{\log n}} \buildrel d \over \rightarrow \mathcal{N}(0,1)}$.  This implies in particular that $\mathbb{P}\{K_n > 2 \log n\} \rightarrow 0$.  Now we are ready to prove: 
\begin{theorem}\label{single}
Let $M$ be a Haar distributed orthogonal $n \times n$ matrix and $P$ a random $n \times n$ permutation matrix independent of $M$.  Then  
\[\sqrt{n}(M P M^T)_{ab} \buildrel d \over \rightarrow \mathcal{N}(0,1) \]
\end{theorem}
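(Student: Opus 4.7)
The plan is to exploit the independence of $P$ and $M$ by viewing the distribution of $\sqrt{n}(MPM^T)_{ab}$ as a mixture. Conditioning on $P$,
\[
\mathbb{P}\{\sqrt{n}(MPM^T)_{ab} \le x\} = \sum_{\sigma \in \mathfrak{S}_n} \mathbb{P}\{P = \sigma\}\, \mathbb{P}\{\sqrt{n}(M\sigma M^T)_{ab} \le x\},
\]
so the task reduces to showing that this weighted average of conditional CDFs tends to $\Phi(x)$, the standard normal CDF, at every $x$.

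To control the mixture, I would split $\mathfrak{S}_n$ into the ``typical'' set $T_n = \{\sigma : K_n(\sigma) \le 2 \log n\}$ and its complement. For $\sigma \notin T_n$, I use the trivial bound $|\mathbb{P}\{\sqrt{n}(M\sigma M^T)_{ab} \le x\} - \Phi(x)| \le 1$; the Goncharov theorem cited in the excerpt gives $\mathbb{P}\{P \notin T_n\} \to 0$, so this tail contributes an asymptotically negligible error. For $\sigma \in T_n$, Lemma \ref{Q_n} applies directly and tells me that $\sup_{\sigma \in T_n} |\mathbb{P}\{\sqrt{n}(M\sigma M^T)_{ab} \le x\} - \Phi(x)|$ can be made smaller than any prescribed $\varepsilon$ for $n$ large. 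Weighting this uniform bound by $\mathbb{P}\{P = \sigma\}$ and summing over $T_n$ (whose total mass is at most $1$) bounds the ``good'' contribution by $\varepsilon$.

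Combining the two estimates, for any $\delta > 0$ and $n$ sufficiently large,
\[
\bigl|\mathbb{P}\{\sqrt{n}(MPM^T)_{ab} \le x\} - \Phi(x)\bigr| \le \varepsilon + \mathbb{P}\{P \notin T_n\} < \delta,
\]
which is exactly weak convergence since $\Phi$ is continuous. The main obstacle in this argument was really proving the uniform estimate over cycle types with at most $2\log n$ components, and that is precisely Lemma \ref{Q_n}; once it is in hand, Theorem \ref{single} is just a conditioning-and-truncation assembly of Lemma \ref{Q_n} and Goncharov's theorem, with no further analytic work required.
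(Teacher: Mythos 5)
Your proposal is correct and is essentially identical to the paper's proof: both condition on $P$, split $\mathfrak{S}_n$ according to whether the number of cycles exceeds $2\log n$, apply the uniform estimate of Lemma \ref{Q_n} on the typical set, and control the complement via Goncharov's theorem. No substantive differences.
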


\begin{proof}
Averaging over all permutation matrices, \[\mathbb{P}\{ \sqrt{n}(M P M^T)_{ab} \le x\} = \frac{1}{n!} \sum_{\sigma \in \mathfrak{S}_n} \mathbb{P}\{ \sqrt{n}(M P_\sigma M^T)_{ab} \le x\} \]  
For any $\delta > 0$ and sufficiently large $n$, Lemma \ref{Q_n} gives 
\begin{align*}
&|\mathbb{P}\{ \sqrt{n}(M P M^T)_{ab} \le x\} - \mathbb{P}\{ Z \le x\}| \\
\le & \frac{1}{n!} \sum_{\sigma \in \mathfrak{S}_n} |\mathbb{P}\{ \sqrt{n}(M P_\sigma M^T)_{ab} \le x\} - \mathbb{P}\{ Z \le x\} | \\
\le & \delta/2 + \mathbb{P}\{ K_n > 2 \log n\} \\
< & \delta  
\end{align*}

\end{proof}

\section{Limiting distribution of linear combination of multiple entries}

We can generalize Theorem \ref{single} to obtain the limiting distribution of arbitrary linear combinations of the matrix entries of $MPM^T$, i.e. for a sequence of $n \times n$ real matrices $A_n$ we will determine the asymptotic behavior of $Tr(A_n M P M^T)$.  Theorem \ref{single} says that this trace converges to a standard normal when the $(a,b)$ entry of $A_n$ is $\sqrt{n}$ and all the other entries are 0.  

We will now consider more general coefficient matrices $A_n$ normalized so that $Tr(A_n A_n^T) = n$.  For convenience, the subscript will often be dropped. 
\begin{remark}
Recall that the limiting normality of $Tr(A M)$ was proven in \cite{diaconisnewman}.  A simple but key observation in the proof is that we can reduce to the case of diagonal $A$ by using the singular value decomposition.  Writing $A = UDV^T$, we get $Tr(AM) = Tr(UDV^TM) = Tr(DV^TMU)$.  This is equal in distribution to $Tr(DM)$ by left and right Haar invariance.  Unfortunately, we cannot reduce to the case of diagonal matrices in our situation since \begin{align*} Tr(A MP M^T) = Tr(UDV^T MP M^T) &= Tr(DV^T MP M^TU) \\ &= Tr(D (V^TM) P (U^T M)^T) \end{align*} and in general $U \neq V$.  If $A$ is normal, e.g. symmetric or orthogonal, then $U = V$ and it would be possible to reduce to diagonal $A$.  However, we will not make this assumption.  
\end{remark}

As before, let $C_n$ denote the $n \times n$ permutation matrix corresponding to the $n$-cycle $(1,2,...,n)$.  Then 
\begin{equation}Tr(A M C_n M^T) = \sum_{i = 1}^n \sum_{j=1}^n A_{ij}(M C_n M^T)_{ji} = \sum_{k=1}^n X_{n,k} \end{equation}
where 
\begin{equation}\label{Xnk} X_{n,k} = \sum_{i=1}^n \sum_{j=1}^n A_{ij} M_{j,k}M_{i,k+1}. 
\end{equation}  It is easy to see that $X_{n,k}$ is a martingale difference array.  Squaring $X_{n,k}$ and expanding, we get

\begin{align*}
X_{n,k}^2 &= \sum_{i=1}^n \sum_{j=1}^n A_{ij}^2 M_{j,k}^2M_{i,k+1}^2 \\
&+ \sum_{i=1}^n \sum_{j=1}^n \sum_{m \neq j} A_{ij} A_{im} M_{j,k}M_{m,k}M_{i,k+1}^2 + \sum_{i=1}^n \sum_{j=1}^n \sum_{l \neq i} A_{ij}A_{lj} M_{j,k}^2M_{i,k+1}M_{l,k+1} \\
&+ \sum_{i=1}^n \sum_{j=1}^n \sum_{l \neq i} \sum_{m \neq j} A_{ij}A_{lm} M_{j,k}M_{i,k+1}M_{m,k}M_{l,k+1}
\end{align*}
 
\noindent By \eqref{weingartenFormula} and the discussion in Remark \ref{collins}, the mixed moments are 

\begin{align*} \mathbb{E} [M_{j,k}^2M_{i,k+1}^2] &= 1/n^2 + O(1/n^3) \\ \mathbb{E}[M_{j,k}^2M_{i,k+1}M_{l,k+1}] &= 0  \\
\mathbb{E}[M_{j,k}M_{i,k+1}M_{m,k}M_{l,k+1}] &= \left\{
\begin{array}{lr}
\frac{-1}{(n-1)n(n+2)}  & \text{ if } j = i, m = l \text{ or } j = l, i = m \cr
0 & \text{ otherwise }
\end{array}
\right.  
\end{align*} 
 
\noindent Therefore the variance is \begin{equation}\label{variance} \mathbb{E}[ X_{n,k}^2] = \left(\frac{1}{n^2} + O\left(\frac{1}{n^3}\right)\right)\sum_{i=1}^n \sum_{j=1}^n A_{ij}^2  - \frac{1}{(n-1)n(n+2)} \sum_{i \neq j} (A_{ij}A_{ji} + A_{ii}A_{jj}) \end{equation}  
Recall we have the constraint $\sum A_{ij}^2 = n$.  It is clear that $\sum\limits_{i \neq j} A_{ij}A_{ji}$ is maximized when $A_{ij} = A_{ji}$ and minimized when $A_{ij} = -A_{ji}$.  In any case, the magnitude of the sum is at most $n$.  By the method of Lagrange multipliers, $\sum\limits_{i \neq j} A_{ii}A_{jj}$ achieves a maximum value of $n^2 - n$ when $A$ is the identity matrix and a minimum value of $-n$ when $\sum A_{ii} = 0$.  Without any further restrictions, the sum $\sum\limits_{i \neq j} A_{ii}A_{jj}$ can fluctuate wildly.  For there to be any hope of a limiting distribution for $Tr(AMC_nM^T)$, we need to have $\lim\limits_{n \to \infty} \sum\limits_{i \neq j}\frac{A_{ii}A_{jj}}{n^2} = c$ for some constant $0 \le c \le 1$.  Then we have the following:

\begin{theorem}\label{main} Let $A_n$ be a sequence of $n \times n$ matrices such that $Tr(A^TA) = n$.  As usual, $M$ denotes an $n \times n$ Haar distributed orthogonal matrix and $C_n$ the permutation matrix $(1,2,...,n)$.  If 
\begin{equation*} \lim_{n \to \infty} \sum_{i \neq j}\frac{A_{ii}A_{jj}}{n^2} = c, \end{equation*} then
\begin{equation*}Tr(AMC_nM^T) \buildrel d \over \rightarrow \mathcal{N}(0, 1-c). \end{equation*} 
\end{theorem}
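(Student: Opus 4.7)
The plan is to apply McLeish's martingale CLT (Theorem \ref{mcleish}) to the normalized array $\tilde X_{n,k} = X_{n,k}/s_n$, where $X_{n,k} = \sum_{i,j} A_{ij} M_{j,k} M_{i,k+1}$ and $s_n^2 = \sum_{k=1}^n \mathbb{E}[X_{n,k}^2]$, so that $\sum_{k} X_{n,k} = Tr(AMC_nM^T)$. Once this gives $\sum_k \tilde X_{n,k} \buildrel d \over \to \mathcal{N}(0,1)$ and we establish $s_n^2 \to 1-c$, Slutsky's theorem will produce the claim. The martingale-difference property comes from the same sign-flip argument used in Section \ref{single entry}: since $X_{n,1}, \dots, X_{n,k-1}$ depend only on the first $k$ columns of $M$, while $X_{n,k}$ is an odd linear function of the entries of column $k+1$, flipping the signs of that column preserves Haar measure and forces $\mathbb{E}[X_{n,k}\mid \mathcal{F}_{n,k-1}] = 0$. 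To identify $s_n^2$, sum equation (\ref{variance}) over $k$ to obtain
\[
s_n^2 = 1 + O(1/n) - \frac{1}{(n-1)(n+2)}\sum_{i\ne j}\bigl(A_{ij}A_{ji} + A_{ii}A_{jj}\bigr).
\]
Cauchy--Schwarz gives $|\sum_{i\ne j} A_{ij}A_{ji}| \le 2n$, so this piece contributes only $O(1/n)$, while the hypothesis $n^{-2}\sum_{i\ne j} A_{ii}A_{jj} \to c$ handles the remaining term, yielding $s_n^2 \to 1-c$.

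For the analytic conditions of Theorem \ref{mcleish}, note first that column exchangeability of Haar measure on $O(n)$ implies $X_{n,k} \buildrel d \over = X_{n,1}$ for every $k$. Lyapunov's condition (\ref{Lyapunov}) with $\delta = 2$ then reduces to $n \mathbb{E}[X_{n,1}^4] \to 0$. Expanding $X_{n,1}^4$ as a sum over eight index pairs against $A_{i_1 j_1} \cdots A_{i_4 j_4}$, Remark \ref{collins} bounds each surviving Weingarten contribution by $O(n^{-4})$; the associated coefficient sums, constrained by the row-pairings, collapse to traces of products of $A$ and $A^T$, each bounded by $Tr(A^TA)^2 = n^2$ using $Tr((A^TA)^2) \le (Tr(A^TA))^2$ and related inequalities. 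Hence $\mathbb{E}[X_{n,1}^4] = O(n^{-2})$ and the Lyapunov ratio is $O(1/n)$.

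For the joint-moment condition $\limsup \sum_{k \ne l} \mathbb{E}[\tilde X_{n,k}^2 \tilde X_{n,l}^2] \le 1$, I split according to whether $|k-l| = 1$ or $|k-l| > 1$. The $O(n)$ adjacent pairs involve eight entries of $M$ distributed across three columns, and a Weingarten analysis analogous to the previous step bounds each $\mathbb{E}[X_{n,k}^2 X_{n,l}^2]$ by $O(n^{-2})$, contributing $O(1/n)$ in total. For non-adjacent $k,l$ the four columns $m_k, m_{k+1}, m_l, m_{l+1}$ are disjoint, and the dominant Weingarten terms in $\mathbb{E}[X_{n,k}^2 X_{n,l}^2]$ are those whose pair partitions factor between the $(k,k+1)$-block and the $(l,l+1)$-block. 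These factorizing contributions yield $\mathbb{E}[X_{n,k}^2]\mathbb{E}[X_{n,l}^2](1 + o(1)) = (s_n^2/n)^2(1 + o(1))$, so summing over the $\sim n^2$ such pairs and dividing by $s_n^4$ produces $1 + o(1)$.

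The subtlest step will be this last factorization for $|k-l| > 1$: although approximate independence of disjoint columns makes it plausible, one must verify that after integration against the $A_{ij}$ coefficients the non-factorizing Weingarten terms (those arising from pair partitions that cross between the two column blocks) contribute only $o(1)$ in aggregate over the $\sim n^2$ pairs, rather than $O(1)$. The strategy is to check case by case that each such crossing pairing is either strictly smaller than $O(n^{-2})$ in magnitude, or else has its coefficient sum controlled by the hypothesis $n^{-2}\sum_{i\ne j} A_{ii}A_{jj} \to c$, exploiting the explicit asymptotics of the orthogonal Weingarten function recalled in Remark \ref{collins}.
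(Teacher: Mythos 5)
Your proposal follows essentially the same route as the paper: the same martingale difference array $X_{n,k}=\sum_{i,j}A_{ij}M_{j,k}M_{i,k+1}$, McLeish's theorem with Lyapunov's condition at $\delta=2$, the variance identification via equation \eqref{variance}, and the split of $\sum_{k\neq l}\mathbb{E}[X_{n,k}^2X_{n,l}^2]$ into adjacent and non-adjacent pairs. The one step you defer --- verifying that the non-factorizing Weingarten pairings for $|k-l|>1$ contribute only $o(1)$ in aggregate --- is exactly what the paper carries out by an explicit eight-case expansion of $\mathbb{E}[X_{n,k}^2X_{n,l}^2]$, finding the leading contributions $1/n^2$, $-2c/n^2$, and $c^2/n^2$ with all crossing terms of order $O(1/n^3)$, which matches the factorization $(1-c)^2/n^2$ you anticipate.
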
     

\begin{proof}
From equation \eqref{variance}, we see that the limiting variance of $Tr(AMC_nM^T)$ is given by \begin{equation}\lim_{n \to \infty} \sum_{k=1}^n \sigma_{n,k}^2 = \lim_{n \to \infty}  n \mathbb{E} X_{n,1}^2 = 1 - c \end{equation}  

Next, we show that Lyapunov's condition for the martingale difference array $X_{n,k}$ holds with $\delta = 2$, i.e. $\displaystyle{\lim_{n \to \infty} n \mathbb{E}[X_{n,k}^4] = 0}$.  

When expanded out, $X_{n,k}^4$ contains terms of the form $\displaystyle{\prod_{p=1}^{4} A_{i_p j_p} M_{j_p,k}M_{i_p,k+1}}$.  Recall the only terms with non-zero expectation are those with an even number of matrix entries $M_{ij}$ in each row and column.  Working through the possibilities, we get (using the shorthand $i \neq j \neq l \neq m$ to mean $i,j,l,m$ all distinct and $\lesssim$ to denote inequality up to an absolute constant as $n \to \infty$)
\begin{align*}
\mathbb{E}[&X_{n,k}^4] \lesssim \sum_{i \neq j \neq l \neq m} (A_{ii}A_{jj}A_{ll}A_{mm}+ A_{ij}A_{ji}A_{ll}A_{mm}+ A_{ij}A_{ji}A_{lm}A_{ml} \\
&+ A_{ij}A_{jl}A_{li}A_{mm} + A_{ij}A_{jl}A_{lm}A_{mi} ) \mathbb{E}[M_{i,k}M_{i,k+1} M_{j,k}M_{j,k+1}M_{l,k}M_{l,k+1} M_{m,k} M_{m,k+1}] \\
&+\sum_{\substack{i \neq j \neq l \\i \neq j \neq m}} (A_{ii}A_{jj}A_{ml}^2 +A_{ii}A_{jl}A_{mj}A_{ml}+ A_{ij}A_{ji}A_{ml}^2 + A_{ij}A_{jl}A_{mi}A_{ml} \\
& + A_{il}A_{jl}A_{mi}A_{mj} )\mathbb{E}[M_{i,k}M_{i,k+1} M_{j,k}M_{j,k+1}M_{l,k}^2M_{m,k+1}^2] \\
&+\sum_{i \neq j \neq l} (A_{ii}A_{jj}A_{il}^2 + A_{ii}A_{ij}A_{il}A_{jl} + A_{ji}A_{ij}A_{il}^2  )\mathbb{E}[M_{i,k}M_{i,k+1}^3 M_{j,k}M_{j,k+1}M_{l,k}^2] \\
&+\sum_{i \neq j} (A_{ii}A_{jj}^3 + A_{ij}A_{ji}A_{jj}^2)\mathbb{E}[M_{i,k}M_{i,k+1} M_{j,k}^3 M_{j,k+1}^3] \\
&+\sum_{i \neq j} (A_{ii}A_{jj}A_{ij}^2 + A_{ji}A_{ij}^3)\mathbb{E}[M_{i,k}M_{i,k+1}^3 M_{j,k}^3 M_{j,k+1}] \\
&+\sum_{\substack{i \neq l \\ j \neq m}} (A_{ji}^2 A_{ml}^2 + A_{ji}A_{mi}A_{jl}A_{ml})\mathbb{E}[M_{i,k}^2M_{j,k+1}^2 M_{l,k}^2 M_{m,k+1}^2] \\
&+\sum_{\substack{j \\ i \neq l}} (A_{ji}^2 A_{jl}^2)\mathbb{E}[M_{i,k}^2M_{j,k+1}^4 M_{l,k}^2 ] \\
&+\sum_{i, j} (A_{ji}^4 )\mathbb{E}[M_{i,k}^4 M_{j,k+1}^4 ] 
\end{align*}

\noindent By \eqref{weingartenFormula}, it is easy to compute 
\begin{align*}
\mathbb{E}[M_{i,k}M_{i,k+1} M_{j,k}M_{j,k+1}M_{l,k}M_{l,k+1} M_{m,k} M_{m,k+1}] &= O(1/n^6) \\
\mathbb{E}[M_{i,k}M_{i,k+1} M_{j,k}M_{j,k+1}M_{l,k}^2M_{m,k+1}^2] &= O(1/n^5)\\
\mathbb{E}[M_{i,k}M_{i,k+1}^3 M_{j,k}M_{j,k+1}M_{l,k}^2] &= O(1/n^5) \\
\mathbb{E}[M_{i,k}M_{i,k+1} M_{j,k}^3 M_{j,k+1}^3] &= O(1/n^5) \\
\mathbb{E}[M_{i,k}M_{i,k+1}^3 M_{j,k}^3 M_{j,k+1}] &= O(1/n^5) \\
\mathbb{E}[M_{i,k}^2M_{j,k+1}^2 M_{l,k}^2M_{m,k+1}^2] &= O(1/n^4) \\
\mathbb{E}[M_{i,k}^2M_{j,k+1}^4 M_{l,k}^2 ] &= O(1/n^4) \\
\mathbb{E}[M_{i,k}^4 M_{j,k+1}^4 ] &= O(1/n^4)
\end{align*}

The sums involving the matrix entries of $A$ are straightforward to bound.  For example, we have \[
\sum_{\substack{i \neq l \\ j \neq m}} A_{ij}A_{im}A_{lj}A_{lm} \le \sum_{i,j,l,m} (A_{ij}^2A_{lm}^2+ A_{im}^2 A_{lj}^2)/2 \le n^2 \]

Going through all the terms, we see that $\displaystyle{\mathbb{E}[X_{n,k}^4] = O(1/n^2)}$ and Lyapunov's condition is satisfied.  The last condition of McLeish's Theorem to verify is that \begin{equation} \limsup_{n \to \infty} \sum_{k \neq l} \mathbb{E} [X_{n,k}^2 X_{n,l}^2] = (1-c)^2\end{equation}  (Note we didn't normalize the array $X_{n,k}$.)  By Cauchy-Schwarz, \begin{equation} \mathbb{E}[X_{n,k}^2 X_{n,l}^2] \le \mathbb{E}[X_{n,k}^4] = O(1/n^2) \end{equation}  Thus, $\displaystyle{ \lim_{n \to \infty} \sum_{|l-k| = 1} \mathbb{E}[X_{n,k}^2 X_{n,l}^2] = 0}$ and therefore we can assume that $|l-k| > 1$.  Written out explicitly, we have \[ X_{n,k}^2 X_{n,l}^2 = \left(\sum_{a=1}^n \sum_{b=1}^n A_{ab} M_{b,k}M_{a,k+1}\right)^2 \left(\sum_{c=1}^n \sum_{d=1}^n A_{cd} M_{d,l}M_{c,l+1}\right)^2 \]
Expanding out and going through all the possibilities, the expectation $\mathbb{E}[X_{n,k}^2 X_{n,l}^2]$ contains the following types of terms:
\begin{enumerate}
\item $\displaystyle{\begin{aligned}[t] &\sum_{\substack{a \neq b \cr c \neq d}} (A_{aa}A_{bb}A_{cc}A_{dd} + A_{ab}A_{ba}A_{cd}A_{dc}
+ A_{aa}A_{bb}A_{cd}A_{dc}) \mathbb{E}[ M_{a,k} M_{a,k+1}M_{b,k}M_{b,k+1} \\& M_{c,l}M_{c,l+1}M_{d,l}M_{d,l+1}] \end{aligned} }$
\item $\displaystyle{\sum_{\substack{a \neq b \\ c \neq d}} (A_{ca}A_{db}A_{ca}A_{db} + A_{ca}A_{db}A_{da}A_{cb})\mathbb{E}[ M_{a,k} M_{c,k+1}M_{b,k}M_{d,k+1} M_{a,l}M_{c,l+1}M_{b,l}M_{d,l+1}]}$ 
\item $\displaystyle{\begin{aligned}[t] & \sum_{\substack{a, d \neq b, c}} (A_{aa}A_{cb}A_{cb}A_{dd} + A_{aa}A_{cb}A_{cd}A_{db} + A_{ca}A_{ab}A_{cd}A_{db})\mathbb{E}[ M_{a,k} M_{a,k+1}M_{b,k}M_{c,k+1} \\& M_{b,l}M_{c,l+1}M_{d,l}M_{d,l+1}] \end{aligned}}$
\item $\displaystyle{\begin{aligned}[t]
&\sum_{\substack{a, d \neq b, c}} (A_{ca}A_{db}A_{ba}A_{dc} + A_{ca}A_{db}A_{da}A_{bc} + A_{da}A_{cb}A_{ba}A_{dc} \\
&+ A_{da}A_{cb}A_{da}A_{bc})\mathbb{E}[ M_{a,k} M_{c,k+1}M_{b,k}M_{d,k+1} M_{a,l}M_{b,l+1}M_{c,l}M_{d,l+1}]
\end{aligned}}$ 
\item $\displaystyle{\sum_{\substack{d \\ a \neq b \neq c}} (A_{aa}A_{cb}A_{db}A_{dc} + A_{ca}A_{ab}A_{db}A_{dc})\mathbb{E}[ M_{a,k} M_{a,k+1}M_{b,k}M_{c,k+1} M_{b,l}M_{c,l}M_{d,l+1}^2]}$ 
\item $\displaystyle{\sum_{\substack{c, d \\ a \neq b}} (A_{aa}A_{bb}A_{dc}^2 + A_{ab}A_{ba}A_{dc}^2)\mathbb{E}[ M_{a,k} M_{a,k+1}M_{b,k}M_{b,k+1} M_{c,l}^2M_{d,l+1}^2]}$ 
\item $\displaystyle{\sum_{\substack{c,d \\ a \neq b}} (A_{ca}A_{cb}A_{da}A_{db} )\mathbb{E}[ M_{a,k}M_{b,k} M_{c,k+1}^2 M_{a,l}M_{b,l} M_{d,l+1}^2]}$ 
\item $\displaystyle{\sum_{a,b,c,d} (A_{ba}^2 A_{dc}^2 )\mathbb{E}[ M_{a,k}^2M_{b,k+1}^2 M_{c,l}^2 M_{d,l+1}^2]}$ 
\end{enumerate}

The orthogonal integrals (using \eqref{weingartenFormula} ) are given by
\begin{align*}
\mathbb{E}[M_{a,k} M_{a,k+1}M_{b,k}M_{b,k+1} M_{c,l}M_{c,l+1}M_{d,l}M_{d,l+1}] &= O(1/n^6) \\
\mathbb{E}[ M_{a,k} M_{c,k+1}M_{b,k}M_{d,k+1} M_{a,l}M_{c,l+1}M_{b,l}M_{d,l+1}] &= O(1/n^6) \\
\mathbb{E}[ M_{a,k} M_{a,k+1}M_{b,k}M_{c,k+1} M_{b,l}M_{c,l+1}M_{d,l}M_{d,l+1}] &= O(1/n^7) \\
\mathbb{E}[ M_{a,k} M_{c,k+1}M_{b,k}M_{d,k+1} M_{a,l}M_{b,l+1}M_{c,l}M_{d,l+1}] &= O(1/n^7) \\
\mathbb{E}[ M_{a,k} M_{a,k+1}M_{b,k}M_{c,k+1} M_{b,l}M_{c,l}M_{d,l+1}^2] &= O(1/n^6) \\
\mathbb{E}[ M_{a,k} M_{a,k+1}M_{b,k}M_{b,k+1} M_{c,l}^2M_{d,l+1}^2] &= O(1/n^5) \\
\mathbb{E}[ M_{a,k}M_{b,k} M_{c,k+1}^2 M_{a,l}M_{b,l} M_{d,l+1}^2] &= O(1/n^5) \\
\mathbb{E}[ M_{a,k}^2M_{b,k+1}^2 M_{c,l}^2 M_{d,l+1}^2] &= O(1/n^4)
\end{align*}

Going through the 8 types of terms in the expansion of $\mathbb{E}[X_{n,k}^2X_{n,l}^2]$, we see that the 2nd, 3rd, 4th, 5th, and 7th terms are all $O(1/n^3)$.  The 1st is $c^2/n^2 + O(1/n^3)$, the 6th is $-c/n^2 + O(1/n^3)$, and the 8th is $1/n^2 + O(1/n^3)$.  Note that in the expansion, we actually get two copies of terms of the 6th type since $k$ and $l$ can be switched.  

Thus, $\displaystyle{\mathbb{E}[X_{n,k}^2X_{n,l}^2] = (1-2c+c^2)/n^2 + O(1/n^3)}$.  When $|l-k| > 1$, $X_{n,k}^2X_{n,l}^2$ are all identically distributed.  This proves that $\displaystyle{\limsup_{n \to \infty} \sum_{k \neq l} \mathbb{E} [X_{n,k}^2 X_{n,l}^2] = (1-c)^2}$.   
\end{proof}

Theorem \ref{main} shows that $Tr(A M C_n M^T) \buildrel d \over \to \mathcal{N}(0, 1-c)$ where $C_n = (1,2,...,n)$.  Unlike in Section \ref{single entry}, $C_n$ is no longer a generic permutation.  The asymptotic distribution of $Tr(A M Q M^T)$ will depend on the number of fixed points of the permutation $Q$.  The following lemma generalizes Lemma \ref{Q_n} from the previous section. 
  
\begin{lemma} \label{41}
Let $A_n$ be a sequence of $n \times n$ matrices such that $Tr(A^T A) = n$ and \[\lim_{n \to \infty} \sum_{i=1}^n \frac{A_{ii}}{n} = s\] for some constant $s$.  Let $M$ be an $n \times n$ Haar distributed orthogonal matrix and \[Z \buildrel d \over = \mathcal{N}(0,1-s^2). \]  Then for every non-negative integer $f$, \[ \lim_{n \to \infty} \max_{Q_n} |\mathbb{P}\{ Tr(A M Q_n M^T) \le x \} - \mathbb{P}\{ Z+fs \le x \} | = 0 \] 
where the maximum is taken over the set of all $n \times n$ permutation matrices, $Q_n$, with $f$ fixed points and at most $2 \log n$ cycles.

\end{lemma}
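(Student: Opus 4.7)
The plan is to reduce an arbitrary $Q_n$ in the specified class to a canonical reference permutation and then assemble via Slutsky's theorem together with the $\varepsilon$-triangle argument used in the proof of Lemma \ref{Q_n}.  Since the distribution of $T_Q := Tr(AMQ_nM^T)$ depends on $Q_n$ only through its cycle type (by right Haar invariance of $M$), I first relabel so that the fixed points of $Q_n$ are $\{1,\ldots,f\}$ and the $k_n - f$ non-trivial cycles are supported on $\{f+1,\ldots,n\}$.  Let $R_n$ be the reference permutation that fixes $\{1,\ldots,f\}$ and acts as the single $(n-f)$-cycle $(f+1,f+2,\ldots,n)$ on the remaining indices, and set $T_R := Tr(AMR_nM^T)$.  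The argument then splits into (i) $T_Q - T_R \to 0$ in probability uniformly in $Q_n$, and (ii) $T_R \buildrel d \over \to Z + fs$ in distribution.

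For step (i), I mimic the cycle-break computation from Lemma \ref{Q_n}: the difference $T_Q - T_R$ equals a sum of at most $4(k_n - f) = O(\log n)$ bilinear forms of the shape $W = \sum_{i,j} A_{ij} M_{j,k} M_{i,l}$ with $k \neq l$, one for each index at which the two permutations disagree in their image.  By the Weingarten analysis of Remark \ref{collins}, for $k \neq l$ the expectation $\mathbb{E}[M_{j,k}M_{i,l}M_{j',k}M_{i',l}]$ is dominated by the diagonal pair partition $j = j'$, $i = i'$, which contributes $1/n^2 + O(1/n^3)$, while the two ``crossed'' partitions contribute only $O(1/n^3)$; together with $Tr(A^TA) = n$, this yields $\mathbb{E}[W^2] = O(1/n)$.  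By Cauchy--Schwarz, $\mathbb{E}[(T_Q - T_R)^2] = O(\log^2 n / n) \to 0$, and Chebyshev then delivers the uniform convergence in probability.

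For step (ii), decompose $T_R = T_F + T_{C'}$, where $T_F = \sum_{k=1}^f v_k^T A v_k$ (with $v_k$ the $k$th column of $M$) is the fixed-point contribution and $T_{C'}$ is the contribution from the $(n-f)$-cycle.  Direct computation gives $\mathbb{E}[v_k^T A v_k] = n^{-1}\sum_i A_{ii} \to s$, and Lemma \ref{moments} bounds the fourth moment so that the variance of $v_k^T A v_k$ is $O(1/n)$; since $f$ is constant, $T_F \to fs$ in $L^2$.  For $T_{C'}$ the proof of Theorem \ref{main} adapts essentially verbatim (the martingale-difference sum now has $n-f$ terms instead of $n$, but $f$ being constant absorbs into the limit), and the hypothesis $n^{-1}\sum_i A_{ii} \to s$ forces the constant $c$ of Theorem \ref{main} to equal $s^2$: indeed $\sum_{i \neq j} A_{ii}A_{jj}/n^2 = (n^{-1}\sum_i A_{ii})^2 - \sum_i A_{ii}^2/n^2$ with $\sum_i A_{ii}^2 \le Tr(A^TA) = n$, so the subtracted term is $O(1/n) \to 0$.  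Hence $T_{C'} \buildrel d \over \to \mathcal{N}(0, 1-s^2) = Z$, and Slutsky yields $T_R \buildrel d \over \to fs + Z$.  Combining (i) and (ii) with the $\varepsilon$-triangle inequality of Lemma \ref{Q_n} then produces the required uniform CDF bound.  The main technical obstacle will be the bookkeeping in step (i), where one must verify, uniformly in $A$ subject only to $Tr(A^TA) = n$, that each cycle-break bilinear form has second moment $O(1/n)$ and that accumulating $O(\log n)$ of them via Cauchy--Schwarz still yields $o(1)$; both are delivered by the Weingarten estimates, after which the mild adaptation of Theorem \ref{main} and the final Slutsky assembly are routine.
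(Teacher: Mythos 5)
Your proposal is correct and follows essentially the same strategy as the paper: bound the cycle-break bilinear forms in $L^2$ via the Weingarten estimates, apply Chebyshev and the $\varepsilon$-triangle CDF argument from Lemma \ref{Q_n}, and isolate the fixed-point contribution, whose diagonal part converges in $L^2$ to $s$ (the off-diagonal part vanishing), so that the whole converges to $fs$. The only structural difference is your choice of reference permutation $R_n$ ($f$ fixed points plus one $(n-f)$-cycle), which forces you to re-run the argument of Theorem \ref{main} for an $(n-f)$-cycle, whereas the paper compares against the full $n$-cycle $C_n$ plus the additive correction $\sum_{m=1}^f V_{m,2}$ and can then cite Theorem \ref{main} verbatim together with the converging-together lemma; both routes are routine at that point.
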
   

\begin{proof}
Let $Q_n$ be a permutation matrix with $f$ fixed points and $k_n \le 2 \log n$ cycles.  As before, we can assume the permutation has the form $\sigma = (1, 2,..., n_1) (n_1 + 1, n_1 + 2,...,n_2)...(n_{k_n-1} + 1, n_{k_n -1} + 2,...,n)$.  Set $X_n = Tr(A M Q_n M^T)$ and $Y_n = Tr(A M C_n M^T)$.  The difference is
\begin{equation*}
X_n - Y_n = \sum_{k=1}^n \sum_{i=1}^n \sum_{j = 1}^n A_{ij} M_{j,k}M_{i,\sigma(k)} -A_{ij} M_{j,k}M_{i,k+1}
\end{equation*} 
As in Lemma \ref{Q_n}, all the terms in the difference cancel except those involving $M_{j,n_m}$ for indices $n_m$ where the permutations $\sigma$ and $(1,...,n)$ differ.  Then we can rewrite this as
\begin{equation}
\sum_{m = 1}^{2 k_n-f} W_m + \sum_{m = 1}^{f} (V_{m,1} + V_{m,2})
\end{equation}
where \begin{equation}W_m \buildrel d \over =  \sum_{i=1}^n \sum_{j = 1}^n A_{ij} M_{j,1}M_{i,2} \end{equation} 
\begin{equation} V_{m,1} \buildrel d \over = \sum_{i \neq j} A_{ij} M_{j,1}M_{i,1}\end{equation} and \begin{equation} V_{m,2} \buildrel d \over = \sum_{i=1}^n A_{ii} M_{i,1}^2 \end{equation} 
By \eqref{variance}, we have the asymptotics  \begin{equation} \label{Wm} n \mathbb{E}[W_m^2] = O(1) \end{equation} and  \begin{equation} \label{Vm1} n \mathbb{E}[V_{m,1}^2 ] = n \sum_{i \neq j} (A_{ij}^2 + A_{ij} A_{ji}) \mathbb{E}[M_{j,k}^2M_{i,k}^2] = O(1) \end{equation}
Since $\displaystyle{\mathbb{E} \bigg[\bigg(\sum_{i=1}^n A_{ii} (M_{i,k}^2 - 1/n) \bigg)^2 \bigg] = O(1/n)}$, we have $\displaystyle{V_{m,2} \buildrel L^2 \over \rightarrow s}$ and therefore
\begin{equation} \label{Vm2} \sum_{m=1}^f V_{m,2} \buildrel d \over \rightarrow fs \end{equation}
Let $\displaystyle{Y_n' = Y_n + \sum_{m=1}^f V_{m,2}}$.  Then using \eqref{Wm}, \eqref{Vm1} and Cauchy-Schwarz, \[\mathbb{E}[(X_n- Y_n')^2] \le (2k_n)^2 (C/n)\] for sufficiently large $n$ for some constant $C$.  By Chebyshev's inequality,
\[\mathbb{P}\{ |X_n- Y_n'| > \varepsilon \} \le \varepsilon^{-2} (2k_n)^2 (C/n) \le \varepsilon^{-2} \frac{16C \log^2 n}{n} .\]   
\noindent From the proof of Lemma \ref{Q_n},
\begin{align*}
|\mathbb{P}\{ X_n \le x \} - \mathbb{P}\{ Y_n' \le x\} | &\le \mathbb{P}\{ Y_n' \le x + \varepsilon\} - \mathbb{P}\{ Y_n' \le x - \varepsilon\} + 2 \mathbb{P}\{ |X_n - Y_n'| > \varepsilon\} \\
&\le \mathbb{P}\{ Y_n' \le x + \varepsilon\} - \mathbb{P}\{ Y_n' \le x - \varepsilon\} + \varepsilon^{-2} \frac{32C \log^2 n}{n}
\end{align*}
By the triangle inequality, 
\begin{multline*}
\mathbb{P} \{ Y_n' \le x + \varepsilon\} - \mathbb{P}\{ Y_n' \le x - \varepsilon\} \le |\mathbb{P}\{ Y_n' \le x + \varepsilon\} - \mathbb{P}\{ Z+fs \le x + \varepsilon\}| \\ 
+ \mathbb{P}\{Z+fs \le x + \varepsilon\} - \mathbb{P}\{ Z+fs \le x - \varepsilon\} + |\mathbb{P}\{ Y_n' \le x - \varepsilon\} - \mathbb{P}\{ Z+fs \le x - \varepsilon\} |
\end{multline*}
Note that $\displaystyle{\lim_{n \to \infty} \sum_{i \neq j} \frac{A_{ii}A_{jj}}{n^2} = s^2}$.
Then by \eqref{Vm2} and Theorem \ref{main} (and the converging together lemma), $Y_n'$ converges weakly to $Z+fs$.  Thus, \[|\mathbb{P}\{ Y_n' \le x \} - \mathbb{P}\{ Z+fs \le x\}| < \delta/2\] for $\delta > 0$ and sufficiently large $n$.  Choose $\varepsilon$ small enough such that $\mathbb{P}\{Z+fs \le x + \varepsilon\} - \mathbb{P}\{ Z+fs \le x - \varepsilon\} < \delta/10$.  For large enough $n$, the three quantities $| \mathbb{P}\{ Y_n' \le x + \varepsilon\} - \mathbb{P}\{ Z+fs \le x + \varepsilon\}|$, $|\mathbb{P}\{ Y_n' \le x - \varepsilon\} - \mathbb{P}\{ Z+fs \le x - \varepsilon\}|$, and $\displaystyle{\varepsilon^{-2} \frac{32C \log^2 n}{n}}$ are all bounded by $\delta/10$.  Then $|\mathbb{P}\{ X_n \le x\} - \mathbb{P}\{ Y_n' \le x\}| < \delta/2$ and we have $\displaystyle{|\mathbb{P}\{ X_n \le x\} - \mathbb{P}\{ Z+fs \le x \}| < \delta}$ for sufficiently large $n$.  Since $Q_n$ was arbitrary, this proves the lemma.
\end{proof}

Putting everything together, we have the main result:
\begin{theorem} \label{main result}
Let $A_n, M, Z$ be as defined in Lemma \ref{41}.  Let $P$ be an $n \times n$ random permutation matrix and let $Y \buildrel d \over = Pois(1)$ chosen independently from $Z$.  Then in the limit of large $n$, \[Tr(AMPM^T) \buildrel d \over \rightarrow Z + sY.\] 
\end{theorem}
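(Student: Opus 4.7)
My plan is to condition on the random permutation $P$ and write the CDF of $Tr(AMPM^T)$ as a mixture of conditional CDFs indexed by the cycle type of $P$. Since Lemma \ref{41} identifies the limit of each such conditional CDF purely in terms of the number of fixed points (provided the total cycle count is at most $2\log n$), the proof reduces to controlling the joint distribution of the fixed-point count and cycle count of a uniform random permutation.

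First I would invoke Goncharov's theorem, quoted just after Lemma \ref{Q_n}, to argue that $\mathbb{P}\{K_n \le 2\log n\} \to 1$, so we may harmlessly restrict attention to this event. Second, I would use the classical \emph{rencontres} asymptotics: the number of fixed points $F_n$ of a uniform random $\sigma \in \mathfrak{S}_n$ converges in distribution to $Y \sim \mathrm{Pois}(1)$, with all moments converging, so $\{F_n\}$ is tight.

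Then, for a truncation level $N$, I would decompose
\[
\mathbb{P}\{Tr(AMPM^T) \le x\} = \sum_{f=0}^{N} \mathbb{P}\{F_n = f,\, K_n \le 2\log n\}\, \overline{p}_n(f;x) + R_n(N),
\]
where $\overline{p}_n(f;x)$ denotes the conditional CDF of $Tr(AMPM^T)$ given $\{F_n = f, K_n \le 2\log n\}$, and $R_n(N)$ is controlled by $\mathbb{P}\{F_n > N\} + \mathbb{P}\{K_n > 2\log n\}$. The uniformity in Lemma \ref{41} over permutations with $f$ fixed points and at most $2\log n$ cycles gives $\overline{p}_n(f;x) \to \mathbb{P}\{Z + fs \le x\}$ for each fixed $f$. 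Sending $n \to \infty$ with $N$ fixed, the main sum tends to $\sum_{f=0}^{N} \mathbb{P}\{Y = f\}\,\mathbb{P}\{Z + fs \le x\}$; then sending $N \to \infty$, the Poisson tail on $Y$ together with the tightness of $F_n$ absorbs the remainder, and the sum converges to $\sum_{f\ge 0} \mathbb{P}\{Y = f\}\,\mathbb{P}\{Z + fs \le x\} = \mathbb{P}\{Z + sY \le x\}$, the last identity using the assumed independence of $Z$ and $Y$.

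The main subtlety, which is more organizational than technical, is sequencing the three limiting operations so that no term escapes control: truncate first at a finite $N$ (paying an $n$-uniform error via the Poisson tail plus the tightness of $F_n$), then let $n \to \infty$ on each of the finitely many surviving terms so as to invoke Lemma \ref{41}, and only then let $N \to \infty$. No probabilistic input beyond Goncharov's theorem, the classical Poisson limit for fixed points, and Lemma \ref{41} itself should be needed.
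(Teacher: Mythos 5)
Your proposal is correct and follows essentially the same route as the paper: decompose the CDF as a mixture over the number of fixed points, truncate at a finite level using the Poisson tail and the convergence $F_n \buildrel d \over \to Y$, and apply the uniformity in Lemma \ref{41} to each of the finitely many surviving terms before releasing the truncation. Your explicit restriction to the event $K_n \le 2\log n$ (needed to legitimately invoke Lemma \ref{41}) is a small piece of bookkeeping the paper's write-up leaves implicit, but it does not change the argument.
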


\begin{proof}
Let $[f]_n$ be the subset of permutations in $\mathfrak{S}_n$ with $f$ fixed points and let $F_n$ denote the number of fixed points of a random permutation in $\mathfrak{S}_n$.  Recall that $F_n \buildrel d \over \rightarrow Y$.

Let $\delta > 0$.  Choose a large integer $F$ such that $\mathbb{P}(Y > F) < \delta/5$.  By Lemma \ref{41}, for sufficiently large $n$,
\begin{align*}
& \qquad |\mathbb{P}\{ Tr(AMPM^T) \le x\} -\mathbb{P}\{ Z+sY \le x\} | \\
& = \Big|\frac{1}{n!} \sum_{f=1}^\infty \sum_{\sigma \in [f]_n} \mathbb{P}\{ Tr(A MP_{\sigma} M^T) \le x\} - \sum_{f=1}^\infty \mathbb{P}\{ Y = f\} \mathbb{P}\{ Z + fs \le x\} \Big| \\
&\begin{multlined} \le \mathbb{P}\{ F_n > F\} + \mathbb{P}\{ Y > F\} + \sum_{f=1}^F \Big|\frac{1}{n!} \sum_{\sigma \in [f]_n} \mathbb{P}\{ Tr(A MP_{\sigma} M^T) \le x\} \\ - \mathbb{P}\{ Y = f\} \mathbb{P}\{ Z + fs \le x\} \Big| \end{multlined} \\
&\begin{multlined} \le \frac{\delta}{2} + \sum_{f=1}^F \bigg( \Big|\frac{1}{n!} \sum_{\sigma \in [f]_n} \big(\mathbb{P}\{ Tr(A MP_{\sigma} M^T) \le x\} - \mathbb{P}\{ Z + fs \le x\} \big)\Big| \\ + |\mathbb{P}\{ F_n=f\} - \mathbb{P}\{ Y = f\} | \bigg) \end{multlined} \\
& \le \frac{2\delta}{3} + \frac{1}{n!} \sum_{f=1}^F \sum_{\sigma \in [f]_n} \Big| \mathbb{P}\{ Tr(A MP_{\sigma} M^T) \le x\} - \mathbb{P}\{ Z + fs \le x\} \Big| \\ 
&< \delta
\end{align*}

\end{proof}

The following corollaries illustrate a few special cases of this theorem.  

\begin{corollary} Let $k$ be a fixed positive integer.  Let $M$ be an $n \times n$ Haar-distributed orthogonal matrix and $P$ be a random $n \times n$ permutation matrix.  Then the joint distribution of $k$ entries of the random matrix $MPM^T$ normalized by $\sqrt{n}$ is asymptotically jointly i.i.d standard normal as $n \to \infty$. \end{corollary}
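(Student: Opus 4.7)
The plan is to invoke the Cram\'er-Wold device and reduce the joint convergence to a single application of Theorem \ref{main result}. Fix $k$ distinct index pairs $(a_1, b_1), \ldots, (a_k, b_k)$ and real coefficients $\lambda_1, \ldots, \lambda_k$ with $\sum_m \lambda_m^2 = 1$. I would construct the deterministic $n \times n$ matrix $A_n$ whose $(b_m, a_m)$ entry is $\sqrt{n}\,\lambda_m$ for $m = 1,\ldots,k$ and whose remaining entries are zero. Then by direct computation
\[
\sum_{m=1}^k \lambda_m \sqrt{n}\,(MPM^T)_{a_m, b_m} = Tr(A_n M P M^T), \qquad Tr(A_n A_n^T) = n\sum_m \lambda_m^2 = n,
\]
so the normalization hypothesis of Theorem \ref{main result} is met.

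Next I would compute the constant $s$ from Lemma \ref{41}. Because $A_n$ has only $k$ nonzero entries, each of magnitude $O(\sqrt{n})$, the diagonal sum satisfies $\bigl\lvert \sum_i (A_n)_{ii} \bigr\rvert = O(\sqrt{n})$, so
\[
s = \lim_{n \to \infty} \frac{1}{n} \sum_{i=1}^n (A_n)_{ii} = 0
\]
irrespective of whether any of the chosen positions happen to lie on the diagonal. Applying Theorem \ref{main result} then gives $Tr(A_n M P M^T) \buildrel d \over \to sY + Z = Z$ with $Z \sim \mathcal{N}(0, 1 - s^2) = \mathcal{N}(0,1)$.

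Since this holds for every unit vector $(\lambda_1, \ldots, \lambda_k) \in \mathbb{R}^k$, the Cram\'er-Wold device concludes that $\bigl(\sqrt{n}(MPM^T)_{a_m, b_m}\bigr)_{m=1}^k$ converges jointly in distribution to $k$ i.i.d.\ standard normals. There is no serious obstacle here; the substantive work is already absorbed into Theorem \ref{main result}, and the only observation required is that a fixed-size collection of entries is too small to generate any asymptotic diagonal mass, which makes the Poisson component $sY$ vanish in the limit.
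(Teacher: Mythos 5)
Your proposal is correct and follows exactly the route the paper intends: the corollary is presented as an immediate special case of Theorem \ref{main result}, obtained via the Cram\'er--Wold device with a sparse coefficient matrix $A_n$ (at most $k$ nonzero entries of size $O(\sqrt{n})$), whose diagonal sum is $O(\sqrt{n})$ so that $s=0$ and the Poisson component vanishes. Your observation that $s=0$ regardless of whether any chosen positions lie on the diagonal is precisely the point that makes the limit i.i.d.\ standard normal.
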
   

\begin{corollary}
Let $A_n$ be a sequence of diagonal $n \times n$ matrices such that $Tr(A A^T) = n$ and $A_{ii} = \sqrt{1/\alpha}$ for $1 \le i \le \alpha n$ for some parameter $0 < \alpha \le 1$.  Let $Z \buildrel d \over = \mathcal{N}(0, 1 - \alpha)$ and $Y \buildrel d \over = Pois(1)$ be independent random variables.  Then \[Tr(AMPM^T) \buildrel d \over \to Z+ \sqrt{\alpha} Y.\]   
\end{corollary}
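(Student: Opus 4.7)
The plan is to obtain this as a direct verification of the hypotheses of Theorem \ref{main result}. The given $A_n$ is a specific choice of diagonal matrix, so the main task is just to compute the relevant traces and diagonal sums and check that they fit into the framework developed in Lemma \ref{41} and Theorem \ref{main result}.

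First I would check the normalization $\mathrm{Tr}(A_n A_n^T) = n$: since $A_n$ is diagonal with $\alpha n$ entries equal to $\sqrt{1/\alpha}$ and the rest zero, we have $\mathrm{Tr}(A_n A_n^T) = (\alpha n)(1/\alpha) = n$, so this is automatic. Next, I would compute the limit of $\sum_{i=1}^{n} A_{ii}/n$ that plays the role of $s$ in Lemma \ref{41}: it equals $(\alpha n)\sqrt{1/\alpha}/n = \sqrt{\alpha}$, so $s = \sqrt{\alpha}$ (assuming $\alpha n$ is interpreted as an integer, or replaced by $\lfloor \alpha n\rfloor$, which does not change the limit).

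With $s = \sqrt{\alpha}$, Lemma \ref{41} identifies the Gaussian component as $Z \sim \mathcal{N}(0, 1 - s^2) = \mathcal{N}(0, 1 - \alpha)$, which matches the statement of the corollary. Theorem \ref{main result} then yields
\[
\mathrm{Tr}(A_n M P M^T) \buildrel d \over \to Z + sY = Z + \sqrt{\alpha}\, Y,
\]
with $Y \sim \mathrm{Pois}(1)$ independent of $Z$, which is precisely the asserted limit.

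There is no real obstacle here beyond observing the substitution; everything reduces to elementary bookkeeping for this particular $A_n$. The only minor wrinkle worth mentioning is the integrality of $\alpha n$, which is harmless because any rounding affects $\mathrm{Tr}(A_n A_n^T)$ and $\sum_i A_{ii}/n$ only by $O(1/n)$ and can be absorbed by a trivial normalization adjustment without changing the limiting distribution.
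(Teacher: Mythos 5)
Your proposal is correct and matches the paper's (implicit) argument: the corollary is stated as a direct specialization of Theorem \ref{main result}, and your computations $\mathrm{Tr}(A_nA_n^T)=(\alpha n)(1/\alpha)=n$ and $s=\lim_n \sum_i A_{ii}/n=\sqrt{\alpha}$ are exactly the verification needed. The remark about integrality of $\alpha n$ is a harmless refinement the paper does not bother with.
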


\begin{remark}
The result in Theorem \ref{main result} is for the defining representation of the symmetric group.  It would be of interest to see if similar distributional results for linear combinations of the matrix entries also hold for higher dimensional representations of the symmetric group.  
\end{remark}
 
\vspace{15pt}
\noindent \textbf{Acknowledgements:} The author wishes to thank his PhD advisor Steven Evans for introducing the problem to him and for helpful discussions and comments on the work.  He also thanks an anonymous referee for helpful suggestions and comments.

\bibliography{references}
\bibliographystyle{plain}


\end{document}